\documentclass[a4paper,12pt]{article}
\setlength{\textwidth}{16cm}
\setlength{\textheight}{23cm}
\setlength{\oddsidemargin}{0mm}
\setlength{\topmargin}{-1cm}

\usepackage{latexsym}
\usepackage{amsmath}
\usepackage{amssymb}
\usepackage{enumerate}

\usepackage{theorem}
\newtheorem{theorem}{Theorem}[section]
\newtheorem{proposition}[theorem]{Proposition}
\newtheorem{lemma}[theorem]{Lemma}
\newtheorem{corollary}[theorem]{Corollary}

\theorembodyfont{\rmfamily}
\newtheorem{proof}{\textmd{\textit{Proof.}}}

\newtheorem{remark}[theorem]{Remark}
\newtheorem{example}[theorem]{Example}
\newtheorem{definition}[theorem]{Definition}

\newtheorem{acknowledgement}{\textmd{\textit{Acknowledgements.}}}

\makeatletter

\@addtoreset{equation}{section}
\makeatother

\newcommand{\qedd}{\hfill \Box}
\newcommand{\ve}{\varepsilon}
\newcommand{\lra}{\longrightarrow}

\newcommand{\N}{\ensuremath{\mathbb{N}}}
\newcommand{\R}{\ensuremath{\mathbb{R}}}
\newcommand{\E}{\ensuremath{\mathbb{E}}}

\newcommand{\cE}{\ensuremath{\mathcal{E}}}

\newcommand{\CAT}{\mathrm{CAT}}
\newcommand{\Prb}{\mathrm{Pr}}
\newcommand{\length}{\mathrm{length}}

\def\Lip{\mathop{\mathbf{Lip}}\nolimits}

\title{Markov type of Alexandrov spaces\\of nonnegative curvature\footnote{
Mathematics Subject Classification (2000): 46B20, 53C21, 60J10.}
\footnote{Keywords: Markov type, Lipschitz map, Alexandrov space.}}
\author{Shin-ichi OHTA\thanks{
Partly supported by the JSPS fellowship for research abroad.}\\
Department of Mathematics, Faculty of Science,\\
Kyoto University, Kyoto 606-8502, JAPAN\\
\textit{e-mail}: sohta@math.kyoto-u.ac.jp}
\date{}
\pagestyle{plain}

%%%%%%    TEXT START    %%%%%%
\begin{document}

\maketitle

\begin{abstract}
We prove that Alexandrov spaces of nonnegative curvature have Markov type $2$ in the sense of Ball.
As a corollary, any Lipschitz continuous map from a subset of an Alexandrov space of nonnegative curvature
into a $2$-uniformly convex Banach space is extended to a Lipschitz continuous map on the entire space.
\end{abstract}

\section{Introduction}\label{sc:int}%%%%%%%%%%%%%%%%%%%%%%%%%%%%%%%%%%%%%%%%%%%%%

The aim of the present article is to contribute to the nonlinearization of the geometry of Banach spaces
from the viewpoint of metric geometry.
Among them, our main object is Markov type of metric spaces due to Ball.

Markov type is a generalization of Rademacher type of Banach spaces.
Rademacher type and cotype describe the behaviour of sums of independent random variables in Banach spaces,
and these properties have fruitful analytic and geometric applications (cf.\ \cite{LT} and \cite{MS}).
Enflo \cite{E} first gave a generalized notion of type of metric spaces, which is called Enflo type now,
and a variant of Enflo type was studied by Bourgain, Milman and Wolfson \cite{BMW}.
After them, Ball \cite{B} introduced the notion of Markov type of metric spaces,
and showed its importance in connection with the extension problem of Lipschitz maps.
He showed that any Lipschitz continuous map from a subset of metric space $X$ having Markov type $2$
into a reflexive Banach space having Markov cotype $2$ can be extended to
a Lipschitz map on the entire space $X$.
Here Markov cotype of Banach spaces is a notion also introduced by Ball.
It is worthwhile to mention that how to formulate a notion of cotype for general metric spaces has been an important question,
we refer to \cite{MN3} for a recent breakthrough on this topic.
Markov type has found further deep applications in the extension problem of Lipschitz maps (\cite{NPSS}, \cite{MN1})
as well as in the theory of bi-Lipschitz embeddings of finite metric spaces or graphs
(\cite{LMN}, \cite{BLMN}, \cite{NPSS}, \cite{MN1}).

Until recently, the only known examples of spaces possessing Markov type $2$ had been Hilbert spaces
and their bi-Lipschitz equivalents.
Naor, Peres, Schramm and Sheffield \cite{NPSS} broke the situation
and showed that $2$-uniformly smooth Banach spaces and some negatively curved metric spaces have Markov type $2$
(Example \ref{ex:Ma}).
They also asked whether all $\CAT(0)$-spaces (nonpositively curved metric spaces)
have Markov type $2$ or not.
We will answer a similar question affirmatively {\it under the reverse curvature bound}.

Our main theorem asserts that Alexandrov spaces of nonnegative curvature have Markov type $2$
with a universal estimate on the Markov type constant (Theorem \ref{th:main}).
This theorem gives us first and rich examples of positively curved spaces having Markov type $2$.
As an immediate corollary by virtue of Ball's extension theorem,
any Lipschitz continuous map from a subset of an Alexandrov space of nonnegative curvature
into a reflexive Banach space having Markov cotype $2$ can be extended to a Lipschitz continuous map on the entire space
(Corollary \ref{cr:Lip}).
In particular, our estimate on the ratio of the Lipschitz constants is independent of the dimension.
Compare this with \cite{LS}, \cite{LPS} and \cite{LN}.
Our key tool is the inequality $(\ref{eq:bc*})$ in Theorem \ref{th:S} due to Sturm.

The article is organized as follows.
We briefly review the theories of linear and nonlinear types and Alexandrov spaces of nonnegative curvature
in Sections \ref{sc:Mark} and \ref{sc:Alex}, respectively.
Section \ref{sc:thm} is devoted to the proof of the main theorem.
Finally, in Section \ref{sc:rem}, we give a short remark on nonlinearizations of the
$2$-uniform smoothness and convexity of Banach spaces in connection with curvature bounds in metric geometry.

\begin{acknowledgement}
I would like to express my gratitude to Assaf Naor and Yuval Peres for their valuable comments
on the first version of the paper.
Their suggestions exceedingly improved the presentation of the paper (see Remark \ref{re:Naor} and Proposition \ref{pr:Naor}).
This work was completed while I was visiting Institut f\"ur Angewandte Mathematik, Universit\"at Bonn.
I am grateful to the institute for its hospitality.
\end{acknowledgement}

\section{Nonlinear types}\label{sc:Mark}%%%%%%%%%%%%%%%%%%%%%%%%%%%%%%%%%%%%%%%%%%%%%%

In this section, we recall Rademacher type and cotype of Banach spaces and several extensions of Rademacher type
to nonlinear spaces.
We refer to \cite{LT} and \cite{MS} for basic facts on Rademacher type and cotype.
Throughout the article, we restrict ourselves to the case of $p=2$, i.e.,
we will treat only type $2$ and cotype $2$.

A Banach space $(V,\| \cdot \|)$ is said to have {\it Rademacher type $2$}
if there is a constant $K \ge 1$ such that, for any $N \in \N$ and $\{ v_i \}_{i=1}^N \subset V$, we have
\begin{equation}\label{eq:ty}
\frac{1}{2^N} \sum_{\ve \in \{ -1,1 \}^N} \bigg\| \sum_{i=1}^N \ve_i v_i \bigg\|^2
 \le K^2 \sum_{i=1}^N \| v_i \|^2,
\end{equation}
where $\ve=(\ve_i)_{i=1}^N$.
A fundamental example of a space possessing Rademacher type $2$ is a $2$-uniformly smooth Banach space.
A Banach space $(V,\| \cdot \|)$ is said to be {\it $2$-uniformly smooth}
(or, equivalently, have {\it modulus of smoothness of power type $2$}) if there is a constant $S \ge 1$ such that
\begin{equation}\label{eq:us}
\bigg\| \frac{v+w}{2} \bigg\|^2 \ge \frac{1}{2}\| v \|^2 +\frac{1}{2}\| w \|^2 -\frac{S^2}{4}\| v-w \|^2
\end{equation}
holds for all $v,w \in V$ (see \cite{BCL}).
The infimum of such a constant $S$ is denoted by $S_2(V)$.
For instance, for $2 \le p <\infty$, an $L^p$-space $L^p(Z)$ over an arbitrary measure space $Z$
is $2$-uniformly smooth with $S_2=\sqrt{p-1}$, and hence it has Rademacher type $2$.
Note that, if $V$ is a Hilbert space, then the parallelogram identity yields equality in $(\ref{eq:us})$ with $S=1$.

Rademacher cotype $2$ and the $2$-uniform convexity of a Banach space are defined similarly
by replacing $(\ref{eq:ty})$ and $(\ref{eq:us})$ with
\begin{align}
&\frac{1}{2^N} \sum_{\ve \in \{ -1,1 \}^N} \bigg\| \sum_{i=1}^N \ve_i v_i \bigg\|^2
 \ge \frac{1}{K^2} \sum_{i=1}^N \| v_i \|^2, \label{eq:co}\\
&\bigg\| \frac{v+w}{2} \bigg\|^2 \le \frac{1}{2} \| v \|^2 +\frac{1}{2} \| w \|^2 -\frac{1}{4C^2} \| v-w \|^2, \label{eq:uc}
\end{align}
respectively.
Denote by $C_2(V)$ the least constant $C \ge 1$ satisfying $(\ref{eq:uc})$.
A $2$-uniformly convex Banach space has Rademacher cotype $2$.
In particular, for $1<p \le 2$, $L^p(Z)$ is $2$-uniformly convex with $C_2=1/\sqrt{p-1}$ and has Rademacher cotype $2$.
It is known that also $L^1(Z)$ has Rademacher cotype $2$, though it is not $2$-uniformly convex.

The first nonlinear extension of Rademacher type was given by Enflo.

\begin{definition}\label{df:En}(Enflo type, \cite{E})
A metric space $(X,d)$ is said to have {\it Enflo type $2$}
if there is a constant $K \ge 1$ such that, for any $N \in \N$ and $\{ x_{\ve} \}_{\ve \in \{ -1,1 \}^N} \subset X$,
it holds that
\begin{equation}\label{eq:En}
\sum_{\ve \in \{ -1,1 \}^N} d(x_{\ve},x_{-\ve})^2 \le K^2 \sum_{\ve \sim \ve'} d(x_{\ve},x_{\ve'})^2,
\end{equation}
where $\ve=(\ve_i)_{i=1}^N$ and $\ve \sim \ve'$ holds if $\sum_{i=1}^N|\ve_i-\ve'_i|=2$
(i.e., $\ve$ and $\ve'$ are adjacent).
The least such a constant $K \ge 1$ is denoted by $E_2(X)$.
\end{definition}

By taking $x_{\ve}=\sum_{i=1}^N \ve_i v_i$, we easily see that Enflo type $2$ implies Rademacher type $2$ for Banach spaces.
However, the converse is not known in general.
See \cite{NS} for a partial positive result and \cite{MN2} for related work.

We next recall Markov type introduced by Ball.
As is indicated in its name, we use a Markov chain to define Markov type.
For $N \in \N$, consider a stationary, reversible Markov chain $\{ M_l \}_{l \in \N \cup \{ 0 \}}$
on the state space $\{ 1,2,\ldots,N \}$ with transition probabilities $a_{ij}:=\Prb(M_{l+1}=j \, |\, M_l=i)$.
Namely, if we set $\pi_i:=\Prb(M_0=i)$, then $\{ \pi_i \}_{i=1}^N$ and $A=(a_{ij})_{i,j=1}^N$ satisfy
\begin{equation}\label{eq:A}
0 \le \pi_i \le 1,\quad 0\le a_{ij} \le 1,\quad \sum_{i=1}^N \pi_i =1, \quad
 \sum_{j=1}^N a_{ij}=1, \quad \pi_i a_{ij} =\pi_j a_{ji}
\end{equation}
for all $i,j=1,2,\ldots,N$.
The third and fourth inequalities guarantee the stationariness ($\sum_{i=1}^N \pi_i a_{ij}=\pi_j$)
and the reversibility of the Markov chain $\{ M_l \}_{l \in \N \cup \{ 0 \}}$.

\begin{definition}\label{df:Ma}(Markov type, \cite[Definition 1.3]{B})
A metric space $(X,d)$ is said to have {\it Markov type $2$} if there is a constant $K \ge 1$ such that,
for any $\alpha \in (0,1)$, $N \in \N$, $\{ x_i \}_{i=1}^N \subset X$, $\{ \pi_i \}_{i=1}^N$
and $A=(a_{ij})_{i,j=1}^N$ satisfying $(\ref{eq:A})$, we have
\begin{equation}\label{eq:M1}
(1-\alpha) \sum_{i,j=1}^N \pi_i c_{ij} d(x_i,x_j)^2 \le K^2 \alpha \sum_{i,j=1}^N \pi_i a_{ij} d(x_i,x_j)^2,
\end{equation}
where we set $C=(c_{ij})_{i,j=1}^N=(1-\alpha)(I-\alpha A)^{-1}$ and $I$ stands for the identity matrix.
The infimum of $K \ge 1$ satisfying $(\ref{eq:M1})$ is denoted by $M_2(X)$.
\end{definition}

We remark that Ball's original definition concerns only the case of $\pi_i \equiv N^{-1}$.
The above slightly extended (but equivalent) formulation can be found in \cite{NPSS}.
Note that
\[ C=(1-\alpha)(I-\alpha A)^{-1} =(1-\alpha) \sum_{l=0}^{\infty} \alpha^l A^l. \]
Hence $C=(c_{ij})_{i,j=1}^N$ also satisfies
\[ 0 \le c_{ij} \le 1, \quad \sum_{j=1}^N c_{ij}=1, \quad \pi_i c_{ij} =\pi_j c_{ji} \]
for all $i,j=1,2,\ldots,N$.

We recall some important properties of Markov type.
Markov type has an equivalent form which is more convenient in some circumstances.
For $l \in \N$ and $A=(a_{ij})_{i,j=1}^N$, we set $A^l=(a^{(l)}_{ij})_{i,j=1}^N$.
In particular, $a^{(1)}_{ij} =a_{ij}$.

\begin{theorem}\label{th:eq}{\rm (\cite[Theorem 1.6]{B})}
Let $(X,d)$ be a metric space and assume that there is a constant $K \ge 1$ such that the inequality
\begin{equation}\label{eq:M2}
\sum_{i,j=1}^N \pi_i a^{(l)}_{ij} d(x_i,x_j)^2 \le K^2l \sum_{i,j=1}^N \pi_i a_{ij} d(x_i,x_j)^2
\end{equation}
holds for all $l \in \N$, $N \in \N$, $\{ x_i \}_{i=1}^N \subset X$, $\{ \pi_i \}_{i=1}^N$
and $A=(a_{ij})_{i,j=1}^N$ satisfying $(\ref{eq:A})$.
Then $(X,d)$ has Markov type $2$ with $M_2(X) \le K$.
Conversely, if $(X,d)$ has Markov type $2$, then $(X,d)$ satisfies $(\ref{eq:M2})$ with $K=2\sqrt{e}M_2(X)$.
\end{theorem}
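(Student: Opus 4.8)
The plan is to prove the two implications separately, using in both the power-series identity $C=(1-\alpha)\sum_{m=0}^{\infty}\alpha^m A^m$ recorded above together with the abbreviation $g(m):=\sum_{i,j=1}^N\pi_i a^{(m)}_{ij}d(x_i,x_j)^2$, so that $g(0)=0$ and $(\ref{eq:M2})$ reads $g(l)\le K^2 l\,g(1)$ for every $l$. It is convenient to read $A$ and $\pi$ as the transition matrix and initial law of the stationary reversible chain $\{M_m\}_{m\ge0}$, so that $g(m)=\E[d(x_{M_0},x_{M_m})^2]$ and, by stationarity, $\E[d(x_{M_p},x_{M_{p+m}})^2]=g(m)$ for all $p\ge0$. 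For the easy implication ``$(\ref{eq:M2})\Rightarrow$ Markov type $2$ with $M_2(X)\le K$'' I would fix $\alpha\in(0,1)$ and data obeying $(\ref{eq:A})$, expand $\sum_{i,j}\pi_i c_{ij}d(x_i,x_j)^2=(1-\alpha)\sum_{m=1}^{\infty}\alpha^m g(m)$ (the rearrangement is harmless, all series converging absolutely since $d$ is bounded on the finite set $\{x_i\}_{i=1}^N$), bound this termwise via $(\ref{eq:M2})$ by $(1-\alpha)K^2\big(\sum_{m\ge1}m\alpha^m\big)g(1)=K^2\alpha(1-\alpha)^{-1}g(1)$, and recognise the result, after multiplying through by $1-\alpha$, as exactly $(\ref{eq:M1})$ for that $\alpha$; since $\alpha$ was arbitrary, $M_2(X)\le K$.

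For the converse I would fix $l\in\N$ and data obeying $(\ref{eq:A})$, set $M:=M_2(X)$, and choose $\alpha:=l/(l+1)$, so that a geometric random time $\tau$ with $\Prb(\tau=k)=(1-\alpha)\alpha^k$ for $k\ge0$, independent of the chain, has mean $\E[\tau]=\alpha(1-\alpha)^{-1}=l$. By the power-series identity, $\E[d(x_{M_0},x_{M_\tau})^2]=\sum_{i,j}\pi_i c_{ij}d(x_i,x_j)^2$, which $(\ref{eq:M1})$ bounds by $M^2\alpha(1-\alpha)^{-1}g(1)=M^2 l\,g(1)$; and re-indexing the geometric sum by $m=k+l$ and using $g\ge0$,
\[
\E[d(x_{M_0},x_{M_{\tau+l}})^2]=\alpha^{-l}\sum_{m\ge l}(1-\alpha)\alpha^m g(m)\le\alpha^{-l}\,\E[d(x_{M_0},x_{M_\tau})^2],
\]
which is $<e\,M^2 l\,g(1)$ because $\alpha^{-l}=(1+l^{-1})^l<e$. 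Since $\E[d(x_{M_\tau},x_{M_{\tau+l}})^2]=g(l)$ by stationarity, the triangle inequality $d(x_{M_\tau},x_{M_{\tau+l}})\le d(x_{M_\tau},x_{M_0})+d(x_{M_0},x_{M_{\tau+l}})$ combined with Minkowski's inequality in $L^2$ gives $\sqrt{g(l)}\le(1+\sqrt e)\,M\sqrt{l\,g(1)}$, hence $g(l)\le(1+\sqrt e)^2 M^2 l\,g(1)\le4e\,M^2 l\,g(1)=(2\sqrt e\,M_2(X))^2 l\,g(1)$, using $(1+\sqrt e)^2=1+2\sqrt e+e<4e$; this is $(\ref{eq:M2})$ with $K=2\sqrt e\,M_2(X)$.

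I expect the first implication to be routine and the converse to be the main obstacle. Simply retaining the single $m=l$ term of the geometric average $\sum_m(1-\alpha)\alpha^m g(m)$ costs a factor quadratic in $l$, and $g$ really can oscillate (e.g.\ for a period-two chain), so no monotonicity of $g$ is available; the crux is therefore to pair the calibration $\alpha=l/(l+1)$ with the shift that compares $x_{M_\tau}$ with $x_{M_{\tau+l}}$ through the base point $x_{M_0}$, which re-expresses the deterministic $l$-step displacement as a sum of two geometric averages, each directly controlled by $(\ref{eq:M1})$.
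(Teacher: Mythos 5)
Your proof is correct. The paper does not prove Theorem \ref{th:eq} itself but cites it from Ball \cite{B}, and your argument --- termwise comparison in the expansion $C=(1-\alpha)\sum_{m\ge0}\alpha^m A^m$ for the easy direction, and for the converse the calibration $\alpha=l/(l+1)$ with an independent geometric time $\tau$, the shift estimate $\alpha^{-l}=(1+1/l)^l<e$, and the triangle/Minkowski step giving $\sqrt{g(l)}\le(1+\sqrt{e})M_2(X)\sqrt{l\,g(1)}\le 2\sqrt{e}\,M_2(X)\sqrt{l\,g(1)}$ --- is essentially Ball's original proof and yields exactly the stated constants.
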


Markov type is known to be strong enough for implying Enflo type.

\begin{proposition}\label{pr:ME}{\rm (\cite[Proposition 1]{NS})}
If a metric space $(X,d)$ has Markov type $2$, then it has Enflo type $2$.
\end{proposition}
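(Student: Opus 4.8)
The plan is to obtain the Enflo-type inequality $(\ref{eq:En})$ as a special instance of Markov type~$2$, by running a random walk on (an enlargement of) the discrete cube $\{-1,1\}^N$. First I would replace Markov type by the equivalent formulation of Theorem~\ref{th:eq}: there is $K$, depending only on $M_2(X)$, such that
\[
\sum_{i,j}\pi_i a^{(l)}_{ij}d(x_i,x_j)^2 \;\le\; K^2 l \sum_{i,j}\pi_i a_{ij}d(x_i,x_j)^2
\]
for every reversible stationary chain and every $l\in\N$. Given $N$ and a family $\{x_\varepsilon\}_{\varepsilon\in\{-1,1\}^N}\subset X$, the strategy is to choose the chain so that a single step flips exactly one coordinate of $\varepsilon$ (so that the right-hand side above captures the edge sum of $(\ref{eq:En})$), and so that, for an appropriate $l$, its $l$-step transitions carry a configuration $\varepsilon$ to its antipode $-\varepsilon$ with a controlled probability (so that the left-hand side captures the antipodal sum of $(\ref{eq:En})$).

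Concretely, for a chain that flips one coordinate per step and has uniform marginal on $\{-1,1\}^N$, the right-hand side $\sum_{i,j}\pi_i a_{ij}d(x_i,x_j)^2$ equals $\sum_{\varepsilon\sim\varepsilon'}d(x_\varepsilon,x_{\varepsilon'})^2$ times a normalizing factor that depends only on $N$ and on the size of the enlargement, while the contribution of the $\varepsilon\mapsto-\varepsilon$ transitions to $\sum_{i,j}\pi_i a^{(l)}_{ij}d(x_i,x_j)^2$ is a multiple of $\sum_\varepsilon d(x_\varepsilon,x_{-\varepsilon})^2$. Substituting both into the displayed inequality and cancelling the normalizing factors would then give $(\ref{eq:En})$ with $E_2(X)$ bounded by a universal function of $M_2(X)$. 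When $N=2$ the bare random walk on the $4$-cycle already suffices, because after two steps it is at the antipode with probability $\tfrac12$.

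The heart of the matter — and the step I expect to be hardest — is choosing the chain correctly when $N\ge 3$. The simple random walk on $\{-1,1\}^N$ itself fails badly: after any number of steps the probability of landing on the exact antipode is exponentially small in $N$, so passing to the $\varepsilon\mapsto-\varepsilon$ transitions would destroy the inequality. One is forced to enlarge the state space — adjoining, say, a ``clock'' or an ordering of the coordinates that lets the walk cross the cube diametrically while staying time-homogeneous and reversible — and then to tune $l$ together with the holding probabilities so that the three $N$-dependent quantities in the final estimate, namely the stationary mass of the starting states, the $l$-step antipodal transition probability, and the factor $K^2l$ supplied by Theorem~\ref{th:eq}, cancel into an $N$-free constant. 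Constructing such a chain and controlling these constants is the substantive part; once it is in place, the assertion that $E_2(X)$ is bounded in terms of $M_2(X)$ follows at once.
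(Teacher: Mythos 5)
First, a point of comparison: the paper does not prove Proposition \ref{pr:ME} at all --- it is imported from \cite[Proposition 1]{NS} --- so your proposal has to stand as a self-contained argument, and as written it is a programme rather than a proof. The bookkeeping you describe is sound: if for every $N$ one had a stationary \emph{reversible} chain satisfying $(\ref{eq:A})$ on an enlargement of $\{-1,1\}^N$, whose single steps project to cube edges (so that the one-step energy is a fixed multiple of the edge sum in $(\ref{eq:En})$), and for which some $l=O(N)$ gives $l$-step transition mass at least a universal constant to the antipodal pairs, then discarding the remaining nonnegative terms on the left of $(\ref{eq:M2})$ would indeed yield $(\ref{eq:En})$ with an $N$-free constant via Theorem \ref{th:eq}. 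But exhibiting such a chain \emph{is} the substance of the statement, and you explicitly leave it unconstructed; the case $N=2$ and the observation that the bare cube walk fails do not carry any of that weight.

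Moreover, the specific repairs you float run into a concrete obstruction coming from the reversibility requirement $\pi_ia_{ij}=\pi_ja_{ji}$ in $(\ref{eq:A})$. A ``clock'' or a fixed ordering of the coordinates that sweeps the walk diametrically across the cube makes the transition non-reversible (a step of the sweep cannot be undone in one step, so $a_{ji}=0$ while $a_{ij}>0$); randomizing the auxiliary component symmetrically to restore reversibility makes the motion diffusive again, and then crossing Hamming distance $N$ within $O(N)$ steps with probability bounded below is precisely the unresolved difficulty. If you concede either $l\sim N^2$ (diffusive crossing) or exponentially small antipodal probabilities (the bare walk), the constant you extract grows with $N$ and Enflo type $2$ is not obtained; note also that the antipodal lower bound must hold uniformly over the states carrying the stationary mass, not merely for some favourable starting configurations. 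So the decisive step is missing: closing the gap requires either an actual construction of such a reversible chain with constants uniform in $N$, or a different argument altogether, e.g.\ the one in \cite{NS}, which the present paper cites instead of reproving.
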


To state Ball's theorem which guarantees the usefulness of Markov type,
we need to define Markov cotype of Banach spaces also introduced by Ball.

\begin{definition}\label{df:co}(Markov cotype, \cite[Definition 1.5]{B})
A Banach space $(V,\| \cdot \|)$ is said to have {\it Markov cotype $2$} if there is a constant $K \ge 1$ such that,
for any $\alpha \in (0,1)$, $N \in \N$, $\{ v_i \}_{i=1}^N \subset V$ and $A=(a_{ij})_{i,j=1}^N$
satisfying $(\ref{eq:A})$ with $\pi_i \equiv N^{-1}$, we have
\[ \alpha \sum_{i,j=1}^N a_{ij} \bigg\| \sum_{k=1}^N(c_{ik}-c_{jk})v_k \bigg\|^2
 \le K^2 (1-\alpha) \sum_{i,j=1}^N c_{ij} \| v_i-v_j \|^2, \]
where we set $C=(c_{ij})_{i,j=1}^N=(1-\alpha)(I-\alpha A)^{-1}$.
We denote by $N_2(V)$ the infimum of such a constant $K \ge 1$.
\end{definition}

We remark that Markov cotype is strictly stronger than Rademacher cotype,
for $L^1(Z)$ has Rademacher cotype $2$ and does not have Markov cotype $2$ (see \cite{B}).
It is known that a $2$-uniformly convex Banach space $(V,\| \cdot \|)$ has Markov cotype $2$
with $N_2(V) \le 2C_2(V)$ (\cite[Theorem 4.1]{B}).
For a Lipschitz continuous map $f:X \lra Y$ between metric spaces,
we denote by $\Lip(f)$ its Lipschitz constant, that is,
\[ \Lip(f) := \sup_{x,y \in X,\ x \neq y} \frac{d_Y(f(x),f(y))}{d_X(x,y)}. \]

\begin{theorem}\label{th:Ball}{\rm (\cite[Theorem 1.7]{B})}
Let $(X,d)$ be a metric space having Markov type $2$ and $(V,\| \cdot \|)$ be a reflexive Banach space
having Markov cotype $2$.
Then, for any Lipschitz continuous map $f:Z \lra V$ from a subset $Z \subset X$,
there exists a Lipschitz continuous extension $\tilde{f}:X \lra V$ of $f$ with
\[ \Lip(\tilde{f}) \le 3M_2(X)N_2(V) \Lip(f). \]
In particular, if $(V,\| \cdot \|)$ is a $2$-uniformly convex Banach space, then we have
\[ \Lip(\tilde{f}) \le 6M_2(X)C_2(V) \Lip(f). \]
\end{theorem}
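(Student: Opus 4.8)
This is Ball's extension theorem, so the plan is to follow the argument of \cite{B}. Normalize $\Lip(f)=1$ and assume $Z\neq\emptyset$ (otherwise $f$ is constant and there is nothing to do); fix a basepoint $z_0\in Z$. The first step is the routine reduction to a \emph{finite} space $X$. Suppose it is known that for every finite $F\subset X$ with $z_0\in F$ the map $f|_{F\cap Z}$ extends to some $g_F\colon F\lra V$ with $g_F(z_0)=f(z_0)$ and $\Lip(g_F)\le K:=3M_2(X)N_2(V)$. Extend each $g_F$ to all of $X$ by the constant value $f(z_0)$ off $F$; then for each fixed $x$ the points $g_F(x)$ all lie in the closed ball of radius $\|f(z_0)\|+K\,d(x,z_0)$, which is weakly compact because $V$ is reflexive. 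Choose an ultrafilter $\mathcal U$ on the directed set of finite $F\ni z_0$ that contains every tail $\{F: F\supseteq F_0\}$, and set $\tilde f(x):=\lim_{\mathcal U} g_F(x)$ (weak limit). Then $\tilde f|_Z=f$, since any prescribed $z\in Z$ eventually lies in $F$, and since eventually $x,y\in F$ the inequality $\|g_F(x)-g_F(y)\|\le K\,d(x,y)$ survives in the limit by weak lower semicontinuity of the norm, so $\Lip(\tilde f)\le K$. Thus we may assume $X=\{x_1,\dots,x_N\}$ is finite.

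For finite $X$ the extension is produced by the Markov-chain machinery underlying Definitions \ref{df:Ma} and \ref{df:co}. Choose a stationary reversible chain $(\pi,A)$ on $\{1,\dots,N\}$ in which every index of $Z$ is an absorbing state reachable from every state, chosen adapted to the metric so that the one-step energy $\sum_{i,j}\pi_i a_{ij}\,d(x_i,x_j)^2$ can be compared with the squared distances to be estimated. For $\alpha\in(0,1)$ set $C=(1-\alpha)(I-\alpha A)^{-1}$ as in Definition \ref{df:Ma}; then $C$ is reversible with the same stationary distribution, the $Z$-indices are still absorbing for $C$, and $C^n$ converges. Define $\tilde f$ to be the harmonic extension of $f$ for this chain, i.e.\ the common limit of $C^n v^{(0)}$ where $v^{(0)}$ equals $f$ on $Z$ and is arbitrary off $Z$; equivalently $\tilde f(x_i)=\E_i[f(x_{M_\tau})]$ for $\tau$ the hitting time of $Z$. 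By construction $\tilde f|_Z=f$, the map $\tilde f$ is $C$-harmonic, and it does not depend on the auxiliary values of $v^{(0)}$ off $Z$.

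It remains to establish the quantitative bound $\|\tilde f(x_i)-\tilde f(x_j)\|\le K\,d(x_i,x_j)$ for all $i,j$, i.e.\ to control the energy of $\tilde f$ along the relevant chains. The plan is to thread the two hypotheses together, starting from the fact that $f$ is $1$-Lipschitz on $Z$. A Dirichlet-form identity for the $Z$-absorbing chain bounds the $A$-energy of $\tilde f$ (or of a suitable iterate $C^n v^{(0)}$) by a multiple of $\sum_{i,j}\pi_i a_{ij}\,d(x_i,x_j)^2$, the adaptedness of the chain being what lets one trade distances in $V$ between boundary values for distances in $X$. Markov cotype of $V$, applied to the $V$-valued data coming from $\tilde f$ (Definition \ref{df:co}), then transfers control from the $A$-weighted energy of a map of the form $Cv$ to the $C$-weighted energy of $v$, while Markov type of $X$, applied to the squared-distance data (Definition \ref{df:Ma}; cf.\ also Theorem \ref{th:eq}), transfers control in the opposite direction, from a $C$-weighted squared-distance sum to an $A$-weighted one. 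Composing these two transfers and optimising over $\alpha$, one arrives at $\|\tilde f(x_i)-\tilde f(x_j)\|\le 3M_2(X)N_2(V)\,d(x_i,x_j)$.

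The hard part is precisely this composition. The cotype inequality moves energy from ``$Cv$-edges'' back to ``$v$-edges'' while the type inequality moves it from ``$C$-edges'' back to ``$A$-edges'', so one must arrange the chain, the exact vector data fed into the cotype inequality, and the parameter $\alpha$ so that the two steps run in compatible directions and the remainder terms are absorbed with a universal constant; this is the delicate core of Ball's argument and is where the factor $3$ enters. Finally, the ``in particular'' clause is immediate from the first part together with the quoted fact that a $2$-uniformly convex space $V$ satisfies $N_2(V)\le 2C_2(V)$.
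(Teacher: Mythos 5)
This theorem is not proved in the paper at all: it is quoted verbatim from Ball (\cite[Theorem 1.7]{B}), so the only question is whether your sketch stands on its own as a proof, and it does not. Your reduction to finite $X$ is fine (weak compactness of balls in the reflexive space $V$, an ultrafilter limit, weak lower semicontinuity of the norm), and the ``in particular'' clause does follow from the quoted bound $N_2(V)\le 2C_2(V)$ together with the fact that a $2$-uniformly convex space is reflexive. But the finite case is the entire content of the theorem, and there you only describe, in qualitative terms, that the Markov type inequality for $X$ and the Markov cotype inequality for $V$ should be ``threaded together'' and optimised over $\alpha$, and you explicitly defer ``the delicate core of Ball's argument'' where the factor $3$ arises. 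That composition \emph{is} the theorem; announcing that it must be arranged so that ``the two steps run in compatible directions'' is not an argument, so the proposal has a genuine gap at its central step.

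Moreover, the one concrete device you do commit to is internally inconsistent. You ask for a \emph{stationary, reversible} chain $(\pi,A)$ in which every index belonging to $Z$ is absorbing and $Z$ is reachable from every state. Reversibility means $\pi_i a_{ij}=\pi_j a_{ji}$ as in $(\ref{eq:A})$; if $i$ is absorbing then $a_{ij}=0$ for $j\neq i$, hence $\pi_j a_{ji}=0$ for every $j$, and reachability of $Z$ then forces $\pi_j=0$ for every $j\notin Z$. So the stationary measure vanishes exactly at the points where the extension must be controlled, the energies $\sum_{i,j}\pi_i a_{ij}\,d(x_i,x_j)^2$ only see $Z$, and the hypotheses of Definitions \ref{df:Ma} and \ref{df:co} (which are formulated precisely for stationary reversible chains) give no information about $\tilde f$ off $Z$. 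Consequently the proposed $Z$-absorbing harmonic extension cannot be fed into the Markov type/cotype inequalities as stated, and both the construction of the extension in the finite case and the derivation of the bound $\Lip(\tilde f)\le 3M_2(X)N_2(V)\Lip(f)$ would have to be replaced by Ball's actual argument rather than the outline given here.
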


We refer to \cite{BLMN}, \cite{LMN}, \cite{MN1} and \cite{NPSS} for further applications of Markov type.
We end this section with several examples of spaces having Markov type.

\begin{example}\label{ex:Ma}
(i) (Hilbert spaces, \cite[Proposition 1.4]{B})
A Hilbert space $(H,\langle \cdot,\cdot \rangle)$ has Markov type $2$ with $M_2(H)=1$.

(ii) (Products)
For two metric spaces $(X_1,d_1)$ and $(X_2,d_2)$ having Markov type $2$,
let $(X,d)$ be the $l^2$-product of them, that is, $X:=X_1 \times X_2$ and
\[ d\big( (x_1,x_2),(y_1,y_2) \big) :=\{ d_1(x_1,y_1)^2 +d_2(x_2,y_2)^2 \}^{1/2} \]
for $(x_1,x_2),(y_1,y_2) \in X$.
Then $(X,d)$ has Markov type $2$ with
\[ M_2(X) \le \max\{ M_2(X_1),M_2(X_2) \}. \]

(iii) (The bi-Lipschitz equivalence)
Given two metric spaces $(X_1,d_1)$ and $(X_2,d_2)$, if $(X_1,d_1)$ has Markov type $2$ and if
there is a bi-Lipschitz homeomorphism $f:X_1 \lra X_2$, then $(X_2,d_2)$ has Markov type $2$ with
\[ M_2(X_2) \le \Lip(f) \Lip(f^{-1}) M_2(X_1). \]

(iv) (Gromov-Hausdorff limits)
If a sequence of (pointed) metric spaces $\{ (X_i,d_i) \}_{i=1}^{\infty}$ converges to a (pointed) metric space $(X,d)$
in the sense of the (pointed)  Gromov-Hausdorff convergence and if every $(X_i,d_i)$ has Markov type $2$
with $\liminf_{i \to \infty}M_2(X_i) <\infty$, then $(X,d)$ has Markov type $2$ with
\[ M_2(X) \le \liminf_{i \to \infty} M_2(X_i). \]

(v) ($2$-uniformly smooth Banach spaces, \cite[Theorem 1.2]{NPSS})
A $2$-uniformly smooth Banach space $(V,\| \cdot \|)$ has Markov type $2$ with $M_2(V) \le 4S_2(V)$.

(vi) (Trees and hyperbolic groups, \cite[Theorem 1.4, Corollary 1.6]{NPSS})
There exists a universal constant $C_t$ for which every tree $T$ with arbitrary positive edge lengths
has Markov type $2$ with $M_2(T) \le C_t$.
There also exists a universal constant $C_h$ such that every $\delta$-hyperbolic group has Markov type $2$
with $M_2 \le C_h(1+\delta)$.
More precisely, we fix a presentation of the group and consider its Cayley graph $G$ equipped with the word metric.
If $G$ is $\delta$-hyperbolic as a metric space, then it has Markov type $2$ with $M_2(G) \le C_h(1+\delta)$.
Naor et al.\ have obtained an estimate for general $\delta$-hyperbolic metric spaces, and it implies the above results.

(vii) (Riemannian manifolds with pinched negative sectional curvature, \cite[Theorem 1.7]{NPSS})
An $n$-dimensional, complete and simply connected Riemannian manifold $M$ has Markov type $2$
if its sectional curvature takes values in $[-R,-r]$ for some $R>r>0$.
Then $M_2(M)$ is estimated from above by using $n$, $r$ and $R$.

(viii) (Laakso graphs, \cite[Proposition 7.1]{NPSS})
The Laakso graphs (\cite{La}) have Markov type $2$.
\end{example}

\section{Alexandrov spaces of nonnegative curvature}\label{sc:Alex}%%%%%%%%%%%%%%%%%%%%%%%%%%%%%%%%%%%

In this section, we recall the definition of Alexandrov spaces of nonnegative curvature.
We refer to \cite{BGP} and \cite{BBI} as standard references.

A metric space $(X,d)$ is said to be {\it geodesic} if every two points $x,y \in X$ can be connected by
a curve $\gamma:[0,1] \lra X$ from $x$ to $y$ with $\length(\gamma)=d(x,y)$.
A rectifiable curve $\gamma:[0,1] \lra X$ is called a {\it geodesic} if it is locally minimizing and has a constant speed.
A geodesic $\gamma:[0,1] \lra X$ is said to be {\it minimal} if it satisfies $\length(\gamma)=d(\gamma(0),\gamma(1))$.

\begin{definition}
A geodesic metric space $(X,d)$ is called an {\it Alexandrov space of nonnegative curvature}
if, for all three points $x,y,z \in X$ and any minimal geodesic $\gamma:[0,1] \lra X$ between $y$ and $z$,
we have
\[ d \bigg( x,\gamma \bigg( \frac{1}{2} \bigg) \bigg)^2
 \ge \frac{1}{2}d(x,y)^2 +\frac{1}{2}d(x,z)^2 -\frac{1}{4}d(y,z)^2. \]
\end{definition}

We recall some examples.
Every example gives a new class of metric spaces having Markov type $2$.

\begin{example}
(i) A complete Riemannian manifold is an Alexandrov space of nonnegative curvature
if and only if its sectional curvature is nonnegative everywhere.
In particular, spheres, tori and symmetric spaces of compact type are Alexandrov spaces of nonnegative curvaure.

(ii) For a compact convex domain $\Omega \subset \R^n$,
let $X=\partial \Omega$ equip the length metric $d$ induced from the standard metric of $\R^n$.
Then $(X,d)$ is an Alexandrov space of nonnegative curvature.

(iii)
Let $(M,g)$ be a Riemannian manifold of nonnegative sectional curvature and $G$ be a compact group acting on $M$ by isometries.
Then the quotient space $M/G$ equipped with the quotient metric is an Alexandrov space of nonnegative curvature.
\end{example}

There is a rich and deep theory on the geometry and analysis on Alexandrov spaces,
but all we need in the present paper is the following characterization due to Sturm
(see also \cite[Proposition 3.2]{LS}).

\begin{theorem}\label{th:S}{\rm (\cite[Theorem 1.4]{S})}
A geodesic metric space $(X,d)$ is an Alexandrov space of nonnegative curvature if and only if,
for any $N \in \N$, $\{ x_i \}_{i=1}^N \subset X$, $y \in X$ and $\{ a_i \}_{i=1}^N \subset [0,1]$ with $\sum_{i=1}^N a_i=1$,
we have
\begin{equation}\label{eq:bc*}
\sum_{i,j=1}^N a_i a_j \{ d(x_i,x_j)^2-d(x_i,y)^2-d(x_j,y)^2 \} \le 0.
\end{equation}
\end{theorem}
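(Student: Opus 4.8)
It is convenient to rewrite $(\ref{eq:bc*})$ in terms of the empirical measure $\mu:=\sum_{i=1}^N a_i\delta_{x_i}$ on $X$: since $\sum_{i,j}a_ia_j\{d(x_i,y)^2+d(x_j,y)^2\}=2\sum_i a_i d(x_i,y)^2$, the inequality $(\ref{eq:bc*})$ asserts exactly that
\[ \tfrac12\int_X\!\int_X d(z,z')^2\,d\mu(z)\,d\mu(z')\ \le\ \int_X d(z,y)^2\,d\mu(z)\qquad\text{for every }y\in X, \]
i.e.\ the ``variance'' of a finitely supported probability measure on $X$ never exceeds its second moment about an arbitrary base point. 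For the ``if'' direction, suppose this holds and take $x,p,q\in X$ with $m$ the midpoint of a minimal geodesic from $p$ to $q$, so $d(p,m)=d(q,m)=\tfrac12 d(p,q)$. Applying the inequality with $N=3$, $(x_1,x_2,x_3)=(x,p,q)$, weights $(\ve,\tfrac{1-\ve}{2},\tfrac{1-\ve}{2})$ and base point $y=m$: the $d(p,q)^2$-contributions cancel, and after dividing by $\ve$ and letting $\ve\downarrow 0$ one is left with precisely $d(x,m)^2\ge\tfrac12 d(x,p)^2+\tfrac12 d(x,q)^2-\tfrac14 d(p,q)^2$. Hence $X$ has nonnegative curvature.

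For the ``only if'' direction, assume $X$ has nonnegative curvature. One first upgrades the defining inequality from midpoints to all points of a minimal geodesic: for any $z\in X$, minimal geodesic $\gamma\colon[0,1]\to X$, and $t\in[0,1]$,
\[ d(z,\gamma(t))^2\ \ge\ (1-t)\,d(z,\gamma(0))^2+t\,d(z,\gamma(1))^2-t(1-t)\,d(\gamma(0),\gamma(1))^2 \]
(equivalently, $t\mapsto d(z,\gamma(t))^2-t^2\,d(\gamma(0),\gamma(1))^2$ is concave on $[0,1]$), which follows for dyadic $t$ by iterating the midpoint inequality on the successive halving sub-geodesics and then, by continuity, for all $t$. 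Since the right-hand side of the reformulated $(\ref{eq:bc*})$ is minimized over $y$ exactly when $y=b$ is a barycenter of $\mu$ --- a minimizer of $z\mapsto\int_X d(z,\cdot)^2\,d\mu$, whose existence may be arranged by passing to the completion, or circumvented by using near-minimizers --- it is enough to establish the variance inequality at the barycenter,
\[ \tfrac12\int_X\!\int_X d(z,z')^2\,d\mu(z)\,d\mu(z')\ \le\ \int_X d(b,z)^2\,d\mu(z). \]

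To attack this I would work at $b$ with three structural facts of nonnegative curvature. First, the angle (``law of cosines'') comparison $d(x_i,x_j)^2\le d(b,x_i)^2+d(b,x_j)^2-2\,d(b,x_i)\,d(b,x_j)\cos\angle_b(x_i,x_j)$. Second, the first-order optimality of $b$ for $z\mapsto\int_X d(z,\cdot)^2\,d\mu$: by the first-variation formula for distances this reads $\sum_i a_i\,d(b,x_i)\cos\angle_b(v,x_i)\le 0$ for every direction $v$ at $b$, where $\angle_b(v,x_i)$ is the angle between $v$ and the geodesic $[b,x_i]$. Third, the space of directions $\Sigma_b X$ is itself an Alexandrov space of curvature $\ge 1$, so its angles obey a spherical triangle comparison, which supplies the leverage that the law of cosines alone lacks. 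The plan is to combine these ingredients --- reading $(\ref{eq:bc*})$ at $b$ off in the tangent cone $T_bX$ with base point its apex --- to obtain the displayed variance inequality.

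Making this last deduction work is the genuine obstacle, and it is \emph{not} reachable by the obvious inductions: replacing two points $x_i,x_j$ by the appropriate point of $[x_i,x_j]$, pairing up the points and passing to midpoints, or feeding the angle comparison in a single time at $b$, each leaves the induction on the number of points unclosed, because the defect in the midpoint/angle comparison of nonnegative curvature enters the variance term and the second-moment term $\int_X d(b,\cdot)^2\,d\mu$ in the same direction, so the two sides cannot be decoupled that way. What is actually required is the full variance inequality for barycenters in nonnegatively curved spaces, which is the substantive content at this point (see \cite{S}, and compare \cite[Proposition 3.2]{LS}); a self-contained treatment would reprove it here via the curvature-$\ge 1$ geometry of $\Sigma_b X$ and the first-variation condition above.
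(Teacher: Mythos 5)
Note first that the paper does not prove Theorem \ref{th:S} at all: it is quoted from Sturm \cite{S} (with a pointer to \cite[Proposition 3.2]{LS}), and only the Hilbert-space analogue is verified as a sanity check. So there is no internal proof to compare against, and your attempt has to stand on its own.

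Your ``if'' direction is correct: with weights $(\ve,\tfrac{1-\ve}{2},\tfrac{1-\ve}{2})$ and $y=m$ the midpoint, expanding $(\ref{eq:bc*})$, dividing by $\ve$ and letting $\ve\downarrow 0$ does yield $d(x,m)^2\ge\tfrac12 d(x,p)^2+\tfrac12 d(x,q)^2-\tfrac14 d(p,q)^2$ (a small wording quibble: the $d(p,q)^2$-terms do not cancel --- their order-$\ve$ mismatch is precisely what produces the $-\tfrac14 d(p,q)^2$ term). The genuine gap is the ``only if'' direction, which is the one the paper actually uses (in Lemma \ref{lm:half}). What you give there is a correct reformulation (variance of $\mu=\sum_i a_i\delta_{x_i}$ bounded by the second moment about any $y$), a reduction to the case where $y$ is a (near-)minimizer of $z\mapsto\sum_i a_i d(z,x_i)^2$, and a list of ingredients (comparison law of cosines, first variation at the minimizer, curvature $\ge 1$ of the space of directions) --- but, as you yourself state, the deduction that combines these into the variance inequality is exactly the step you cannot close, and you defer it to \cite{S} and \cite[Proposition 3.2]{LS}. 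Since that step \emph{is} the substantive content of the theorem (equivalently, the inequality $\sum_{i,j}b_ib_j\cos\tilde{\angle}x_iyx_j\ge 0$ for comparison angles at $y$ and all $b_i\ge 0$, which is the Lang--Schroeder--Sturm inequality), the proposal proves only the easy half and cites away the hard half; it is therefore not a proof of the stated equivalence. There are also minor unresolved technicalities in your reduction (existence of the minimizer requires completeness or properness, and the completion of a non-complete geodesic space need not remain geodesic, so the ``pass to the completion'' step needs care), but these are secondary to the missing core argument.
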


More probabilistically speaking, the inequality $(\ref{eq:bc*})$ says that,
for any finitely supported $X$-valued random variable $Z$ and its independent copy $\widetilde{Z}$,
\[ \E[d(Z,\widetilde{Z})^2] \le 2\E[d(Z,y)^2] \]
holds for all $y \in X$.

We also remark that the inequality $(\ref{eq:bc*})$ corresponds to the following fact
in a Hilbert space $(H,\langle \cdot,\cdot \rangle)$.
For any $N \in \N$, $\{ v_i \}_{i=1}^N \subset H$ and $\{ a_i \}_{i=1}^N \subset [0,1]$ with $\sum_{i=1}^N a_i=1$,
\begin{align*}
&\sum_{i,j=1}^N a_i a_j \{ \| v_i-v_j \|^2 -\| v_i-w \|^2 -\| v_j-w \|^2 \} \\
&= 2\sum_{i,j=1}^N a_i a_j \langle v_i-w,w-v_j \rangle
 = 2\bigg\langle \bigg( \sum_{i=1}^N a_i v_i \bigg) -w,w- \bigg( \sum_{j=1}^N a_j v_j \bigg) \bigg\rangle \\
&= -2\bigg\| \bigg( \sum_{i=1}^N a_i v_i \bigg) -w \bigg\|^2 \le 0
\end{align*}
holds for all $w \in H$.

\section{Markov type of Alexandrov spaces}\label{sc:thm}%%%%%%%%%%%%%%%%%%%%%%%%%%%%%%%%%%%%%%%%%%%%%%%%%%%

In this section, we prove our main theorem.
Throughout the section, let $(X,d)$ be an Alexandrov space of nonnegative curvature,
and fix $N \in \N$, $\{ x_i \}_{i=1}^N \subset X$, $\{ \pi_i \}_{i=1}^N$
and $A=(a_{ij})_{i,j=1}^N$ satisfying $(\ref{eq:A})$.
For $1 \le i,j \le N$ and $l \in \N$, set $d_{ij}:=d(x_i,x_j)$ and
\[ \cE(l):=\sum_{i,j=1}^N \pi_i a^{(l)}_{ij}d_{ij}^2. \]
Recall the notation $A^l=(a^{(l)}_{ij})_{i,j=1}^N$ and that $(\ref{eq:A})$ implies
\[ 0 \le a^{(l)}_{ij} \le 1, \quad \sum_{j=1}^N a^{(l)}_{ij}=1, \quad
 \pi_i a^{(l)}_{ij}=\pi_j a^{(l)}_{ji} \]
for all $i,j=1,2,\ldots,N$.

\begin{lemma}\label{lm:half}
For any $l \in \N$, we have $\cE(2l) \le 2\cE(l)$.
\end{lemma}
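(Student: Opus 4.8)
The plan is to exploit the stationarity and reversibility of the Markov chain to rewrite $\cE(2l)$ as a sum over pairs of $l$-step transitions through an intermediate state, and then apply Sturm's inequality $(\ref{eq:bc*})$ with the intermediate state playing the role of the center point $y$. Concretely, since $a^{(2l)}_{ij} = \sum_{k=1}^N a^{(l)}_{ik} a^{(l)}_{kj}$, I would first write
\[
\cE(2l) = \sum_{i,j,k=1}^N \pi_i a^{(l)}_{ik} a^{(l)}_{kj} d_{ij}^2.
\]
Using reversibility in the form $\pi_i a^{(l)}_{ik} = \pi_k a^{(l)}_{ki}$, this becomes $\sum_k \pi_k \sum_{i,j} a^{(l)}_{ki} a^{(l)}_{kj} d_{ij}^2$. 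For each fixed $k$, the inner double sum is of the form $\sum_{i,j} b_i b_j d(x_i,x_j)^2$ with weights $b_i = a^{(l)}_{ki}$ summing to $1$, so Theorem \ref{th:S} applied with center $y = x_k$ gives
\[
\sum_{i,j=1}^N a^{(l)}_{ki} a^{(l)}_{kj}\, d_{ij}^2
\le \sum_{i,j=1}^N a^{(l)}_{ki} a^{(l)}_{kj}\,\{ d(x_i,x_k)^2 + d(x_j,x_k)^2 \}
= 2 \sum_{i=1}^N a^{(l)}_{ki}\, d_{ik}^2,
\]
where the last equality uses $\sum_j a^{(l)}_{kj} = 1$.

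Summing this over $k$ against the weights $\pi_k$ yields
\[
\cE(2l) \le 2 \sum_{k,i=1}^N \pi_k a^{(l)}_{ki}\, d_{ik}^2 = 2\cE(l),
\]
which is exactly the claimed bound. The only mild subtlety is keeping the symmetrization of indices straight and making sure the reversibility identity is applied at the right moment; once $\cE(2l)$ is written as an average over the intermediate state $k$ of quantities to which $(\ref{eq:bc*})$ applies verbatim, the estimate is immediate. I do not anticipate a genuine obstacle here — the lemma is essentially a one-line consequence of Sturm's characterization combined with the Chapman–Kolmogorov identity $A^{2l} = A^l A^l$ and reversibility. The real work of the paper lies in iterating this halving estimate (Lemma \ref{lm:half}) to control $\cE(l)$ for general $l$, not just powers of $2$, and then feeding the result into Theorem \ref{th:eq}; but that is the content of the later arguments, not of this lemma.
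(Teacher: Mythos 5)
Your argument is correct and is essentially the paper's own proof: both rewrite $\cE(2l)$ via the Chapman--Kolmogorov identity and reversibility as $\sum_k \pi_k \sum_{i,j} a^{(l)}_{ki} a^{(l)}_{kj} d_{ij}^2$, and then apply Sturm's inequality $(\ref{eq:bc*})$ with weights $a^{(l)}_{ki}$ and center $y=x_k$ to bound the inner sum by $2\sum_i a^{(l)}_{ki} d_{ik}^2$. The only cosmetic difference is that the paper adds and subtracts $d_{ki}^2+d_{kj}^2$ and shows the residual term is nonpositive, whereas you apply the rearranged form of $(\ref{eq:bc*})$ directly.
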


\begin{proof}
We calculate
\begin{align*}
&\cE(2l) =\sum_{i,j,k=1}^N \pi_i a^{(l)}_{ik} a^{(l)}_{kj} d_{ij}^2 \\
&= \sum_{i,j,k=1}^N \pi_k a^{(l)}_{ki} a^{(l)}_{kj} (d_{ki}^2+d_{kj}^2+d_{ij}^2-d_{ki}^2-d_{kj}^2) \\
&= \sum_{i,j,k=1}^N \pi_k a^{(l)}_{ki} a^{(l)}_{kj} (d_{ki}^2+d_{kj}^2)
 +\sum_{k=1}^N \pi_k \bigg\{ \sum_{i,j=1}^N a^{(l)}_{ki} a^{(l)}_{kj} (d_{ij}^2-d_{ki}^2-d_{kj}^2) \bigg\}.
\end{align*}
Since $\sum_{i=1}^N a^{(l)}_{ki}=1$, we have
\[ \sum_{i,j,k=1}^N \pi_k a^{(l)}_{ki} a^{(l)}_{kj} (d_{ki}^2+d_{kj}^2)
 =\sum_{i,k=1}^N \pi_k a^{(l)}_{ki} d_{ki}^2 +\sum_{j,k=1}^N \pi_k a^{(l)}_{kj} d_{kj}^2
 =2\cE(l). \]
Moreover, applying Theorem \ref{th:S} with $a_i=a^{(l)}_{ki}$ and $y=x_k$, we obtain
\[ \sum_{i,j=1}^N a^{(l)}_{ki} a^{(l)}_{kj} (d_{ij}^2-d_{ki}^2-d_{kj}^2) \le 0 \]
for any $k$.
This completes the proof.
$\qedd$
\end{proof}

\begin{theorem}\label{th:main}
Let $(X,d)$ be an Alexandrov space of nonnegative curvature.
Then $(X,d)$ has Markov type $2$ with $M_2(X) \le 1+\sqrt{2}$.
More precisely, $(X,d)$ satisfies the inequality $(\ref{eq:M2})$ with $K=1+\sqrt{2}$.
\end{theorem}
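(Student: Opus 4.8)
The plan is to verify the equivalent form $(\ref{eq:M2})$ with $K=1+\sqrt2$ and then invoke Theorem \ref{th:eq}. Introduce $f(l):=\cE(l)^{1/2}$; the target becomes $f(l)\le(1+\sqrt2)\sqrt{l}\,f(1)$ for every $l\in\N$, which squares to $(\ref{eq:M2})$. (If $\cE(1)=0$ the argument below forces $\cE(l)=0$ for all $l$, so there is nothing to prove.) Two ingredients feed into this: a subadditivity estimate valid in any metric space, and the doubling estimate of Lemma \ref{lm:half}, which is the only place nonnegative curvature is used.

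First I would prove subadditivity $f(l+m)\le f(l)+f(m)$ for all $l,m\in\N$. Using $A^{l+m}=A^lA^m$ and the triangle inequality $d_{ij}\le d_{ik}+d_{kj}$,
\[
\cE(l+m)=\sum_{i,j,k=1}^N\pi_i a^{(l)}_{ik}a^{(m)}_{kj}d_{ij}^2
\le\sum_{i,j,k=1}^N\pi_i a^{(l)}_{ik}a^{(m)}_{kj}\bigl(d_{ik}^2+2d_{ik}d_{kj}+d_{kj}^2\bigr).
\]
Since $\sum_j a^{(m)}_{kj}=1$ and $\sum_i\pi_i a^{(l)}_{ik}=\pi_k$, the $d_{ik}^2$ and $d_{kj}^2$ terms contribute $\cE(l)+\cE(m)$, while the Cauchy--Schwarz inequality applied to the probability weights $\pi_i a^{(l)}_{ik}a^{(m)}_{kj}$ bounds the cross term by $2\cE(l)^{1/2}\cE(m)^{1/2}$. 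Hence $\cE(l+m)\le\bigl(f(l)+f(m)\bigr)^2$, as claimed.

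Second, Lemma \ref{lm:half} gives $f(2l)\le\sqrt2\,f(l)$, and iterating from $l=1$ yields $f(2^k)\le(\sqrt2)^k f(1)$ for all $k\ge0$. I would then prove $f(l)\le(1+\sqrt2)\sqrt{l}\,f(1)$ by strong induction on $l$. For $l=2^m$ this is immediate from $f(2^m)\le(\sqrt2)^m f(1)=\sqrt{l}\,f(1)$. For general $l$, pick $m$ with $2^m\le l<2^{m+1}$ and write $l=2^m+r$; if $r=0$ we are in the previous case, and if $1\le r\le 2^m-1<l$ then subadditivity, the bound on $f(2^m)$ and the induction hypothesis for $r$ give
\[
f(l)\le f(2^m)+f(r)\le(\sqrt2)^m f(1)+(1+\sqrt2)\sqrt{r}\,f(1).
\]
It then suffices to check the elementary inequality $(\sqrt2-1)\sqrt{2^m}+\sqrt{r}\le\sqrt{2^m+r}$: with $t:=\sqrt{r/2^m}\in(0,1)$, squaring reduces it to $(\sqrt2-1)^2+2(\sqrt2-1)t\le1$, which holds because the left side is increasing in $t$ and equals $1$ at $t=1$ (here $(\sqrt2-1)(\sqrt2+1)=1$ is the key identity). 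Multiplying by $1+\sqrt2$ yields $f(l)\le(1+\sqrt2)\sqrt{l}\,f(1)$, completing the induction; squaring gives $(\ref{eq:M2})$ with $K=1+\sqrt2$, and Theorem \ref{th:eq} then yields $M_2(X)\le1+\sqrt2$.

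The main obstacle is obtaining the \emph{sharp} constant $1+\sqrt2$ rather than a weaker one: bounding $f(l)$ by summing $f(2^k)\le(\sqrt2)^k f(1)$ over the binary digits of $l$ costs a full geometric series and only gives $M_2(X)\le2+\sqrt2$. The dyadic split $l=2^m+r$, which isolates the dominant scale before recursing, is what forces the constant down to $1+\sqrt2$; the case $l=2^{m+1}-1$ shows this value is optimal for this argument.
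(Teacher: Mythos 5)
Your proposal is correct and follows essentially the same route as the paper: subadditivity of $\sqrt{\cE(\cdot)}$ from the triangle inequality, the doubling bound $\cE(2^k)\le 2^k\cE(1)$ from Lemma \ref{lm:half}, and an induction based on the dyadic split $l=2^m+r$ using the identity $(\sqrt{2}-1)(\sqrt{2}+1)=1$, which is exactly the paper's inequality $1+(1+\sqrt{2})\sqrt{t}\le(1+\sqrt{2})\sqrt{1+t}$ in disguise. The only difference is cosmetic: you spell out the subadditivity via Cauchy--Schwarz, which the paper leaves as an appeal to the triangle inequality.
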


\begin{proof}
We will prove the theorem by induction.
Note that the triangle inequality implies $\sqrt{\cE(l+m)} \le \sqrt{\cE(l)}+\sqrt{\cE(m)}$ for all $l,m \in \N$,
and that Lemma \ref{lm:half} yields $\cE(2^n) \le 2^n\cE(1)$ for all $n \in \N$.
Assume that $\cE(l) \le (1+\sqrt{2})^2l\cE(1)$ holds for all $1 \le l \le 2^n$ for fixed $n \in \N$.
Then, for $2^n+1 \le l \le 2^{n+1}$, take $t \in (0,1]$ with $l=(1+t)2^n$ and observe
\begin{align*}
\sqrt{\cE(l)} &\le \sqrt{\cE(2^n)}+\sqrt{\cE(t2^n)} \le \sqrt{2^n \cE(1)}+(1+\sqrt{2})\sqrt{t2^n\cE(1)} \\
&= \{ 1+(1+\sqrt{2})\sqrt{t} \} \sqrt{2^n\cE(1)} \le (1+\sqrt{2})\sqrt{1+t}\sqrt{2^n\cE(1)} \\
&=(1+\sqrt{2})\sqrt{l\cE(1)}.
\end{align*}
Here the fourth implication follows from the fact that the function
\[ f(t)=(1+\sqrt{2})(\sqrt{1+t}-\sqrt{t}) \]
is decreasing in $t \in (0,1]$ and $f(1)=1$.
$\qedd$
\end{proof}

\begin{remark}\label{re:Naor}
The author's original proof used Lemma \ref{lm:half} as well as the inequality
\[ \alpha^{2l} \cE(2l) +2\alpha^{2l+1} \cE(2l+1) +\alpha^{2l+2} \cE(2l+2)
 \le 2(1+\alpha)\alpha^l \{ \alpha^l \cE(l) +\alpha^{l+1} \cE(l+1) \} \]
for $l \in \N$ and $\alpha \in (0,1)$,
and then we obtain by calculation that $M_2(X) \le \sqrt{6}$.
After the first version of this paper was completed, the author learned
the above simpler, improved proof from A.~Naor and Y.~Peres.
\end{remark}

We have two corollaries by virtue of Proposition \ref{pr:ME} and Theorem \ref{th:Ball}.

\begin{corollary}\label{cr:En}
Let $(X,d)$ be an Alexandrov space of nonnegative curvature.
Then $(X,d)$ has Enflo type $2$.
\end{corollary}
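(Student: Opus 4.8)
The plan is simply to chain together the two facts that are already in hand. By Theorem \ref{th:main}, every Alexandrov space $(X,d)$ of nonnegative curvature has Markov type $2$, in fact with $M_2(X) \le 1+\sqrt{2}$. By Proposition \ref{pr:ME} (due to Naor and Schramm, \cite{NS}), Markov type $2$ implies Enflo type $2$ for an arbitrary metric space. Combining these two statements applied to $(X,d)$ immediately gives the assertion, so the ``proof'' is essentially a one-line deduction.

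If one wants a quantitative version, the only additional step would be to trace the implication in Proposition \ref{pr:ME}: the argument of \cite{NS} produces an estimate of $E_2(X)$ in terms of $M_2(X)$ (roughly, one applies the Markov type inequality to the random walk on the discrete cube $\{-1,1\}^N$, where the diagonal hitting distribution after $N$ steps is comparable to the exchange of $\ve$ and $-\ve$), and feeding in $M_2(X) \le 1+\sqrt{2}$ yields a universal bound on $E_2(X)$. Since the corollary as stated only claims the qualitative property, I would not carry out this bookkeeping and would simply cite Proposition \ref{pr:ME}.

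There is no real obstacle here: the genuine work was Lemma \ref{lm:half} and Theorem \ref{th:main}, which exploit Sturm's inequality $(\ref{eq:bc*})$; once Markov type $2$ is established, Enflo type $2$ comes for free from the general implication. (As an alternative to invoking \cite{NS}, one could also give a self-contained proof by applying Theorem \ref{th:S} directly along the edges of the cube, in the spirit of the computation in Lemma \ref{lm:half}, but this would merely reprove a special case of Proposition \ref{pr:ME} and is not worth the extra length.)
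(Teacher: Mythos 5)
Your deduction is correct and is exactly how the paper obtains Corollary \ref{cr:En}: Theorem \ref{th:main} gives Markov type $2$, and Proposition \ref{pr:ME} then yields Enflo type $2$. No further comment is needed.
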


\begin{corollary}\label{cr:Lip}
Let $(X,d)$ be an Alexandrov space of nonnegative curvature
and $(V,\| \cdot \|)$ be a reflexive Banach space having Markov cotype $2$.
Then, for any Lipschitz continuous map $f:Z \lra V$ from a subset $Z \subset X$,
there exists a Lipschitz continuous extension $\tilde{f}:X \lra V$ of $f$ with
\[ \Lip(\tilde{f}) \le 3(1+\sqrt{2})N_2(V) \Lip(f). \]
In particular, if $(V,\| \cdot \|)$ is $2$-uniformly convex, then we have
\[ \Lip(\tilde{f}) \le 6(1+\sqrt{2})C_2(V) \Lip(f). \]
\end{corollary}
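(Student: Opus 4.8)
The plan is to derive Corollary \ref{cr:Lip} directly from the two results already in hand: Theorem \ref{th:main}, which gives $M_2(X) \le 1+\sqrt{2}$ for any Alexandrov space $(X,d)$ of nonnegative curvature, and Ball's extension theorem (Theorem \ref{th:Ball}), which handles the extension of Lipschitz maps into reflexive Banach spaces of Markov cotype $2$. There is essentially no new mathematics to do; the corollary is a matter of substituting the numerical bound on $M_2(X)$ into the conclusion of Theorem \ref{th:Ball}.

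First I would invoke Theorem \ref{th:main} to record that $M_2(X) \le 1+\sqrt{2}$. Next, since $(V,\|\cdot\|)$ is by hypothesis a reflexive Banach space with Markov cotype $2$, the hypotheses of Theorem \ref{th:Ball} are met with this $X$ and this $V$; applying it to the given Lipschitz map $f \colon Z \lra V$ on a subset $Z \subset X$ produces a Lipschitz extension $\tilde f \colon X \lra V$ with $\Lip(\tilde f) \le 3 M_2(X) N_2(V) \Lip(f)$. Plugging in $M_2(X) \le 1+\sqrt{2}$ yields $\Lip(\tilde f) \le 3(1+\sqrt{2}) N_2(V) \Lip(f)$, which is the first assertion. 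For the second assertion, one uses the stated fact (from \cite[Theorem 4.1]{B}, recalled in Section \ref{sc:Mark}) that a $2$-uniformly convex Banach space has Markov cotype $2$ with $N_2(V) \le 2 C_2(V)$; in particular such a $V$ is reflexive, so the first part applies, and substituting $N_2(V) \le 2 C_2(V)$ gives $\Lip(\tilde f) \le 6(1+\sqrt{2}) C_2(V) \Lip(f)$. Alternatively, one may just quote the second half of Theorem \ref{th:Ball} directly, which already packages the bound $\Lip(\tilde f) \le 6 M_2(X) C_2(V) \Lip(f)$, and then insert $M_2(X) \le 1+\sqrt{2}$.

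There is no real obstacle here: the only points worth a sentence of care are (i) checking that a $2$-uniformly convex Banach space is indeed reflexive, so that the reflexivity hypothesis of Theorem \ref{th:Ball} is not an extra assumption in the special case, and (ii) making sure the numerical constants are carried through without slippage. If I wanted to present the cleanest possible argument I would simply chain the inequalities $\Lip(\tilde f) \le 3 M_2(X) N_2(V) \Lip(f) \le 3(1+\sqrt{2}) N_2(V) \Lip(f)$ and, in the uniformly convex case, continue with $\le 6(1+\sqrt{2}) C_2(V) \Lip(f)$, so the whole proof is two displayed lines long.

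\begin{proof}
By Theorem \ref{th:main}, $(X,d)$ has Markov type $2$ with $M_2(X) \le 1+\sqrt{2}$.
Since $(V,\| \cdot \|)$ is a reflexive Banach space having Markov cotype $2$,
Theorem \ref{th:Ball} provides a Lipschitz continuous extension $\tilde{f}:X \lra V$ of $f$ with
\[ \Lip(\tilde{f}) \le 3M_2(X)N_2(V) \Lip(f) \le 3(1+\sqrt{2})N_2(V) \Lip(f). \]
If moreover $(V,\| \cdot \|)$ is $2$-uniformly convex, then it is reflexive and, as recalled in Section \ref{sc:Mark}, it has Markov cotype $2$ with $N_2(V) \le 2C_2(V)$; substituting this into the above inequality yields
\[ \Lip(\tilde{f}) \le 6(1+\sqrt{2})C_2(V) \Lip(f). \]
$\qedd$
\end{proof}
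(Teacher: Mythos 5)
Your proof is correct and is exactly the argument the paper intends: the corollary is stated as an immediate consequence of Theorem \ref{th:main} combined with Ball's extension theorem (Theorem \ref{th:Ball}), with the $2$-uniformly convex case handled via $N_2(V)\le 2C_2(V)$, just as you do. No gaps; the extra remark that a $2$-uniformly convex space is reflexive is a sensible touch of care.
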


We mention that our bound of the ratio of Lipschitz constants is independent of the dimension of $X$.
Compare this with \cite[Theorem 1.6]{LN}.

\section{Additional remarks}\label{sc:rem}%%%%%%%%%%%%%%%%%%%%%%%%%%%%%%%%%%%%%%%%%%%

This section is devoted to a short remark toward a nonlinearization of the $2$-uniform smoothness (and convexity).
As we have already seen in $(\ref{eq:us})$, the $2$-uniform smoothness of a Banach space
is defined by using the inequality
\begin{equation}\label{eq:usg}
\bigg\| \frac{v+w}{2} \bigg\|^2 \ge \frac{1}{2}\| v \|^2 +\frac{1}{2}\| w \|^2 -\frac{S^2}{4}\| v-w \|^2.
\end{equation}
By replacing $v$ and $w$ with $w+v$ and $w-v$, this inequality is rewritten as
\begin{equation}\label{eq:usl}
\bigg\| \frac{v+w}{2} \bigg\|^2 \le \frac{S^2}{2}\| v \|^2 +\frac{1}{2}\| w \|^2 -\frac{1}{4}\| v-w \|^2.
\end{equation}

Natural generalizations of $(\ref{eq:usg})$ and $(\ref{eq:usl})$ would be the following:
Let $(X,d)$ be a geodesic metric space.
For any three points $x,y,z \in X$ and minimal geodesic $\gamma:[0,1] \lra X$ from $y$ to $z$, we have
\begin{equation}\label{eq:usA}
d\bigg( x,\gamma \bigg( \frac{1}{2} \bigg) \bigg)^2
\ge \frac{1}{2}d(x,y)^2 +\frac{1}{2}d(x,z)^2 -\frac{S^2}{4}d(y,z)^2
\end{equation}
or
\begin{equation}\label{eq:usC}
d\bigg( x,\gamma \bigg( \frac{1}{2} \bigg) \bigg)^2
\le \frac{S^2}{2}d(x,y)^2 +\frac{1}{2}d(x,z)^2 -\frac{1}{4}d(y,z)^2.
\end{equation}
We will say that a geodesic metric space $(X,d)$ {\it satisfies $(\ref{eq:usA})$} (or $(\ref{eq:usC})$) if $(\ref{eq:usA})$
(or $(\ref{eq:usC})$) holds for all $x,y,z \in X$ and all minimal geodesic $\gamma:[0,1] \lra X$ from $y$ to $z$.
On one hand, the inequality $(\ref{eq:usA})$ generalizes the nonnegatively curved property in the sense of
Alexandrov which corresponds to the case of $S=1$ (see Section \ref{sc:Alex}).
On the other hand, the inequality $(\ref{eq:usC})$ extends the $\CAT(0)$-inequality
which amounts to the case of $S=1$ (cf.\ \cite{BH}).
This is a reason why both negatively and positively curved spaces have Markov type $2$.
Compare Example \ref{ex:Ma} and Theorem \ref{th:main}.

We mention that we can also regard $(\ref{eq:usg})$ as an upper curvature bound of the unit sphere (see \cite{O1}),
and that the reverse inequality of $(\ref{eq:usA})$ (a generalized $2$-uniform convexity) has been studied in \cite{O2}.

As an application of the inequality $(\ref{eq:usC})$, we give an example of a nonlinear and non-Riemannian
(in other words, Finslerian) space possessing Enflo type $2$.
We first prove a lemma.

\begin{lemma}\label{lm:CAT}
Let a geodesic metric space $(X,d)$ satisfy $(\ref{eq:usC})$.
Then, for any four points $w,x,y,z \in X$, we have
\begin{equation}\label{eq:four}
d(w,y)^2+d(x,z)^2 \le S^2 \{ d(w,x)^2+d(y,z)^2 \} +d(w,z)^2+d(y,x)^2.
\end{equation}
\end{lemma}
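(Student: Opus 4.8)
The plan is to derive $(\ref{eq:four})$ from a single geodesic joining $x$ and $z$, feeding it into the midpoint inequality $(\ref{eq:usC})$ twice with two different base points. Fix a minimal geodesic $\gamma\colon[0,1]\to X$ from $x$ to $z$ and set $m:=\gamma(1/2)$. First I would apply $(\ref{eq:usC})$ with base point $w$ and the geodesic $\gamma$, so that $x$ plays the role of the endpoint carrying the factor $S^2$; this gives
\[
d(w,m)^2 \le \frac{S^2}{2}d(w,x)^2 + \frac12 d(w,z)^2 - \frac14 d(x,z)^2 .
\]
Then I would apply $(\ref{eq:usC})$ once more, now with base point $y$ and with $\gamma$ traversed in the opposite direction, from $z$ to $x$; since the reversed geodesic has the same midpoint $m$, this yields
\[
d(y,m)^2 \le \frac{S^2}{2}d(y,z)^2 + \frac12 d(y,x)^2 - \frac14 d(x,z)^2 .
\]
Reversing the orientation in the second application is exactly what places the $S^2$ weight on $d(y,z)$ instead of on $d(y,x)$, so that after summing the factor $S^2$ falls on the pair $\{d(w,x),d(y,z)\}$, as demanded by $(\ref{eq:four})$.

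Next I would pass through the midpoint. By the triangle inequality together with $(a+b)^2\le 2a^2+2b^2$ we have $d(w,y)^2 \le 2d(w,m)^2 + 2d(m,y)^2$. Substituting the two displayed bounds, the two terms $-\frac14 d(x,z)^2$ each become $-\frac12 d(x,z)^2$ and add up to $-d(x,z)^2$, while the coefficients $\frac12$ on $d(w,z)^2$ and on $d(y,x)^2$ become $1$; moving $d(x,z)^2$ to the left-hand side produces precisely $(\ref{eq:four})$.

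There is no serious obstacle: the whole argument is a short computation once the correct geodesic and the correct pair of base points are selected. The only points demanding care are the bookkeeping of which endpoint carries the coefficient $S^2$ in each of the two invocations of $(\ref{eq:usC})$ — this is what forces us to use one minimal geodesic between $x$ and $z$, read in both directions — and the observation that the factor-$2$ loss coming from the triangle inequality is exactly cancelled by the two $-\frac14 d(x,z)^2$ contributions, so that $d(x,z)^2$ appears with coefficient exactly $1$ on the left-hand side of $(\ref{eq:four})$ and no superfluous constant survives.
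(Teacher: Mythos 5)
Your proposal is correct and coincides with the paper's own argument: one minimal geodesic between $x$ and $z$, the midpoint inequality $(\ref{eq:usC})$ applied twice (with base points $w$ and $y$, the orientation chosen so that $S^2$ falls on $d(w,x)$ and $d(y,z)$), followed by $d(w,y)^2\le 2d(w,m)^2+2d(m,y)^2$ and rearrangement. The bookkeeping of coefficients matches the paper exactly, so there is nothing to add.
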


\begin{proof}
Take a minimal geodesic $\gamma:[0,1] \lra X$ between $x$ and $z$.
Then $(\ref{eq:usC})$ yields that
\begin{align*}
d\bigg( w,\gamma \bigg( \frac{1}{2} \bigg) \bigg)^2
&\le \frac{S^2}{2}d(w,x)^2 +\frac{1}{2}d(w,z)^2 -\frac{1}{4}d(x,z)^2, \\
d\bigg( y,\gamma \bigg( \frac{1}{2} \bigg) \bigg)^2
&\le \frac{S^2}{2}d(y,z)^2 +\frac{1}{2}d(y,x)^2 -\frac{1}{4}d(x,z)^2.
\end{align*}
Thus we see
\begin{align*}
d(w,y)^2 &\le \bigg\{ d\bigg( w,\gamma \bigg( \frac{1}{2} \bigg) \bigg)
 +d\bigg( \gamma \bigg( \frac{1}{2} \bigg),y \bigg) \bigg\}^2 \\
&\le 2\bigg\{ d\bigg( w,\gamma \bigg( \frac{1}{2} \bigg) \bigg)^2
 +d\bigg( \gamma \bigg( \frac{1}{2} \bigg),y \bigg)^2 \bigg\} \\
&\le S^2 \{ d(w,x)^2+d(y,z)^2 \} +d(w,z)^2+d(y,x)^2-d(x,z)^2.
\end{align*}
This is the required inequality.
$\qedd$
\end{proof}

\begin{proposition}\label{pr:CAT}
If a geodesic metric space $(X,d)$ satisfies $(\ref{eq:usC})$, then it has Enflo type $2$ with $E_2(X) \le S$.
In particular, a $\CAT(0)$-space $(X,d)$ has Enflo type $2$ with $E_2(X)=1$, and
a $2$-uniformly smooth Banach space $(V,\| \cdot \|)$ has Enflo type $2$ with $E_2(V) \le S_2(V)$.
\end{proposition}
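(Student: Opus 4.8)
The plan is to establish the Enflo type $2$ inequality with constant exactly $S$ by induction on the dimension $N$ of the discrete cube, using Lemma \ref{lm:CAT} (inequality $(\ref{eq:four})$) as the only real ingredient. Concretely, I would prove directly that
\[ \sum_{\varepsilon \in \{-1,1\}^N} d(x_\varepsilon, x_{-\varepsilon})^2 \le S^2 \sum_{\varepsilon \sim \varepsilon'} d(x_\varepsilon, x_{\varepsilon'})^2 \]
for every $N \in \N$ and every $\{x_\varepsilon\}_{\varepsilon \in \{-1,1\}^N} \subset X$, which is $(\ref{eq:En})$ with $K = S$. As in $(\ref{eq:En})$, each antipodal pair and each edge is counted with both orientations, so the case $N = 1$ is immediate: both sides equal $2d(x_1,x_{-1})^2$ up to the harmless factor $S^2 \ge 1$.

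For the step from $N-1$ to $N$, I would peel off the last coordinate. Writing $\varepsilon = (\delta,\eta)$ with $\delta \in \{-1,1\}^{N-1}$ and $\eta \in \{-1,1\}$, set $a_\delta := x_{(\delta,1)}$ and $b_\delta := x_{(\delta,-1)}$, obtaining two cube families of dimension $N-1$. Then the left-hand antipodal sum is $\sum_\delta \{ d(a_\delta,b_{-\delta})^2 + d(b_\delta,a_{-\delta})^2 \}$, and the edge set of the $N$-cube splits into the last-direction edges, contributing $2\sum_\delta d(a_\delta,b_\delta)^2$, together with the edges inside the two facets $\{\eta=1\}$ and $\{\eta=-1\}$, contributing $\sum_{\delta\sim\delta'} d(a_\delta,a_{\delta'})^2$ and $\sum_{\delta\sim\delta'} d(b_\delta,b_{\delta'})^2$ respectively.

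The one idea the argument hinges on is to apply $(\ref{eq:four})$ to the four points $(w,x,y,z) = (a_\delta, b_\delta, b_{-\delta}, a_{-\delta})$, viewed as the cyclic quadrilateral $a_\delta \to b_\delta \to b_{-\delta} \to a_{-\delta}$. Its two diagonals are exactly the two $N$-cube antipodal pairs $d(a_\delta,b_{-\delta})$ and $d(b_\delta,a_{-\delta})$; the pair of opposite sides that $(\ref{eq:four})$ weights by $S^2$ is precisely the pair of last-direction edges $d(a_\delta,b_\delta)$, $d(b_{-\delta},a_{-\delta})$; and the pair of opposite sides weighted by $1$ is the pair of $(N-1)$-dimensional antipodal pairs $d(a_\delta,a_{-\delta})$, $d(b_\delta,b_{-\delta})$. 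Summing this instance of $(\ref{eq:four})$ over all $\delta \in \{-1,1\}^{N-1}$ (and using $\delta \mapsto -\delta$ to symmetrize), the left-hand side reproduces $\sum_\varepsilon d(x_\varepsilon,x_{-\varepsilon})^2$, the weight-$S^2$ terms give $S^2$ times the last-direction edge sum, and the weight-$1$ terms become the two $(N-1)$-dimensional antipodal sums. To the latter I would apply the induction hypothesis, bounding them by $S^2$ times the edge sums of the two facets; adding everything, $\sum_\varepsilon d(x_\varepsilon,x_{-\varepsilon})^2$ is bounded by $S^2$ times the full $N$-cube edge sum, closing the induction with the constant exactly $S$.

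The delicate point — and the reason for that particular cyclic order — is that the $(N-1)$-antipodal sides must enter $(\ref{eq:four})$ with coefficient $1$, not $S^2$: otherwise the factor $S^2$ supplied by the induction hypothesis would be compounded with another $S^2$ and the constant would degrade to $S^2$ at each halving of the dimension, destroying any dimension-free bound. Everything else is routine bookkeeping. For the two special cases: a $\CAT(0)$-space satisfies $(\ref{eq:usC})$ with $S=1$ (this is the $\CAT(0)$ inequality), and since $E_2 \ge 1$ always, $E_2(X)=1$; and a $2$-uniformly smooth Banach space $(V,\|\cdot\|)$ satisfies the linear inequality $(\ref{eq:usl})$ with $S=S_2(V)$, which, geodesics being line segments and midpoints averages, is exactly $(\ref{eq:usC})$ with the same constant, so $E_2(V) \le S_2(V)$.
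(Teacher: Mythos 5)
Your proposal is correct and is essentially the paper's own proof: induction on $N$, peeling off the last coordinate and applying Lemma \ref{lm:CAT} to exactly the quadruple $(x_{(\delta,1)},x_{(\delta,-1)},x_{(-\delta,-1)},x_{(-\delta,1)})$, so that the last-direction edges carry the weight $S^2$ while the facet antipodal pairs enter with coefficient $1$ and are then handled by the induction hypothesis. Your treatment of the $\CAT(0)$ case ($S=1$, $E_2\ge 1$) and of the $2$-uniformly smooth case via $(\ref{eq:usl})$ likewise matches the paper.
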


\begin{proof}
We shall prove by induction in $N \in \N$.
In the case of $N=1$, for any $\{ x_1,x_{-1} \} \subset X$, we immediately see
\[ d(x_1,x_{-1})^2+d(x_{-1},x_1)^2 \le S^2 \{ d(x_1,x_{-1})^2+d(x_{-1},x_1)^2 \}. \]
Fix $N \ge 2$ and suppose that, for any $\{ x_{\delta} \}_{\delta \in \{ -1,1 \}^{N-1}} \subset X$,
it holds that
\[ \sum_{\delta \in \{ -1,1 \}^{N-1}} d(x_{\delta},x_{-\delta})^2
 \le S^2 \sum_{\delta \sim \delta'} d(x_{\delta},x_{\delta'})^2, \]
where $\delta=(\delta_i)_{i=1}^{N-1}$ and $\delta \sim \delta'$ holds if $\sum_{i=1}^{N-1} |\delta_i-\delta'_i|=2$.
Now we choose an arbitrary $\{ x_{\ve} \}_{\ve \in \{ -1,1 \}^N} \subset X$.
For each $\delta \in \{ -1,1 \}^{N-1}$, Lemma \ref{lm:CAT} implies
\begin{align*}
&d(x_{(\delta,1)},x_{(-\delta,-1)})^2 +d(x_{(\delta,-1)},x_{(-\delta,1)})^2 \\
&\le S^2 \{ d(x_{(\delta,1)},x_{(\delta,-1)})^2 +d(x_{(-\delta,-1)},x_{(-\delta,1)})^2 \}
 +d(x_{(\delta,1)},x_{(-\delta,1)})^2 +d(x_{(-\delta,-1)},x_{(\delta,-1)})^2.
\end{align*}
Summing up this inequality in $\delta \in \{ -1,1 \}^{N-1}$, we have
\begin{align*}
\sum_{\ve \in \{ -1,1 \}^N} d(x_{\ve},x_{-\ve})^2
&\le S^2 \sum_{\delta \in \{ -1,1 \}^{N-1}} \{ d(x_{(\delta,1)},x_{(\delta,-1)})^2+d(x_{(\delta,-1)},x_{(\delta,1)})^2 \} \\
&\qquad +\sum_{\delta \in \{ -1,1 \}^{N-1}} \{ d(x_{(\delta,1)},x_{(-\delta,1)})^2+d(x_{(\delta,-1)},x_{(-\delta,-1)})^2 \}.
\end{align*}
By our assumption, the second term in the right-hand side is estimated as
\begin{align*}
&\sum_{\delta \in \{ -1,1 \}^{N-1}} \{ d(x_{(\delta,1)},x_{(-\delta,1)})^2+d(x_{(\delta,-1)},x_{(-\delta,-1)})^2 \} \\
&\qquad \le S^2 \sum_{\delta \sim \delta'} \{ d(x_{(\delta,1)},x_{(\delta',1)})^2+d(x_{(\delta,-1)},x_{(\delta',-1)})^2 \}.
\end{align*}
Therefore we obtain
\[ \sum_{\ve \in \{ -1,1 \}^N} d(x_{\ve},x_{-\ve})^2 \le S^2 \sum_{\ve \sim \ve'}d(x_{\ve},x_{\ve'})^2. \]
This completes the proof.
$\qedd$
\end{proof}

The following observation (in connection with \cite{FS}) is due to A.~Naor.

\begin{proposition}\label{pr:Naor}
Let $(X,d)$ be a metric space satisfying the Ptolemy inequality, that is,
for all four points $w,x,y,z \in X$, we have
\begin{equation}\label{eq:Pto}
d(w,y) \cdot d(x,z) \le d(w,x) \cdot d(y,z) +d(w,z) \cdot d(y,x).
\end{equation}
Then $(X,d)$ satisfies the inequality $(\ref{eq:four})$ with $S=\sqrt{3}$.
In particular, $(X,d)$ has Enflo type $2$ with $E_2(X) \le \sqrt{3}$.
\end{proposition}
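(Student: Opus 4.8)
The plan is to prove the first assertion — that the Ptolemy inequality $(\ref{eq:Pto})$ forces $(\ref{eq:four})$ with $S=\sqrt{3}$ — by a short direct computation, and then to note that the Enflo type bound comes for free. Indeed, the induction on $N$ in the proof of Proposition \ref{pr:CAT} uses the geodesic structure and the hypothesis $(\ref{eq:usC})$ only to invoke Lemma \ref{lm:CAT}, i.e.\ only to produce inequality $(\ref{eq:four})$; so once $(\ref{eq:four})$ is at hand directly with $S^{2}=3$, the remaining arithmetic induction goes through verbatim and yields $E_{2}(X)\le\sqrt{3}$, even though $X$ is not assumed to be geodesic here.

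For $(\ref{eq:four})$ itself, fix $w,x,y,z\in X$ and split the left-hand side as
\[
 d(w,y)^{2}+d(x,z)^{2} = (d(w,y)-d(x,z))^{2} + 2\,d(w,y)\,d(x,z).
\]
The first summand I would bound by the triangle inequality alone: applying it along the paths $w\to x\to z\to y$ and $x\to w\to y\to z$ gives $|d(w,y)-d(x,z)|\le d(w,x)+d(y,z)$, hence $(d(w,y)-d(x,z))^{2}\le(d(w,x)+d(y,z))^{2}\le 2\,d(w,x)^{2}+2\,d(y,z)^{2}$. The second summand is where the Ptolemy hypothesis enters: $(\ref{eq:Pto})$ gives $d(w,y)\,d(x,z)\le d(w,x)\,d(y,z)+d(w,z)\,d(y,x)$, so $2\,d(w,y)\,d(x,z)\le 2\,d(w,x)\,d(y,z)+2\,d(w,z)\,d(y,x)$, and two applications of $2\alpha\beta\le\alpha^{2}+\beta^{2}$ replace these two products by $d(w,x)^{2}+d(y,z)^{2}$ and $d(w,z)^{2}+d(y,x)^{2}$ respectively. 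Adding the two estimates, the terms $d(w,x)^{2}$ and $d(y,z)^{2}$ accumulate with total coefficient $3$, while $d(w,z)^{2}$ and $d(y,x)^{2}$ occur with coefficient $1$, which is precisely $(\ref{eq:four})$ with $S=\sqrt{3}$.

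There is no genuine obstacle here once one decides to peel off the term $(d(w,y)-d(x,z))^{2}$ before invoking Ptolemy; the only point demanding care is the bookkeeping — one must use $(\ref{eq:Pto})$ with the labelling that places the diagonal product $d(w,y)\,d(x,z)$ on the left, and keep track of which pair of opposite sides, $\{d(w,x),d(y,z)\}$ or $\{d(w,z),d(y,x)\}$, is to carry the weight $S^{2}=3$. The value $\sqrt{3}$ produced this way is a consequence of the two crude steps $(d(w,x)+d(y,z))^{2}\le 2\,d(w,x)^{2}+2\,d(y,z)^{2}$ and $2\,d(w,x)\,d(y,z)\le d(w,x)^{2}+d(y,z)^{2}$ used above; I would not expect it to be sharp, and I would not attempt to optimise it here.
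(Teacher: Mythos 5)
Your argument is correct and is essentially identical to the paper's proof: the same splitting $d(w,y)^2+d(x,z)^2=\{d(w,y)-d(x,z)\}^2+2d(w,y)\,d(x,z)$, the same triangle-inequality bound $\{d(w,y)-d(x,z)\}^2\le 2d(w,x)^2+2d(y,z)^2$, the same use of $(\ref{eq:Pto})$ plus $2\alpha\beta\le\alpha^2+\beta^2$ on the cross term, and the same observation that the induction in Proposition \ref{pr:CAT} only needs $(\ref{eq:four})$. Your explicit remark that the geodesic hypothesis is therefore not needed for the Enflo type conclusion is a nice clarification of what the paper states tersely, but it is not a different route.
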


\begin{proof}
For any four points $w,x,y,z \in X$, the Ptolemy inequality $(\ref{eq:Pto})$ yields
\begin{align*}
&d(w,y)^2 +d(x,z)^2 -\{ d(w,y)-d(x,z) \}^2 =2d(w,y) \cdot d(x,z) \\
&\le 2d(w,x) \cdot d(y,z) +2d(w,z) \cdot d(y,x) \\
&\le d(w,x)^2 +d(y,z)^2 +d(w,z)^2 +d(y,x)^2.
\end{align*}
It follows from the triangle inequality that
\[ \{ d(w,y)-d(x,z) \}^2 \le \{ d(w,x)+d(y,z) \}^2 \le 2d(w,x)^2+2d(y,z)^2. \]
Therefore we obtain
\begin{align*}
&d(w,y)^2+d(x,z)^2 \\
&\le d(w,x)^2+d(y,z)^2+d(w,z)^2+d(y,x)^2 +\{ d(w,y)-d(x,z) \}^2 \\
&\le 3\{ d(w,x)^2+d(y,z)^2 \} +d(w,z)^2+d(y,x)^2.
\end{align*}
The proof of Proposition \ref{pr:CAT} shows that this inequality derives Enflo type $2$.
$\qedd$
\end{proof}

\end{document}